\theoremstyle{plain}
\newtheorem{theorem}{Theorem}[subsection]
\newtheorem{lemma}[theorem]{Lemma}
\newtheorem{proposition}[theorem]{Proposition}
\theoremstyle{remark}
\newtheorem{remark}[theorem]{Remark}
\theoremstyle{definition}
\newtheorem{definition}[theorem]{Definition}
\newtheorem*{notation}{Notation}
\newcommand{\Rmnum}[1]{\expandafter\@slowromancap\romannumeral #1@}
\begin{document}


\title{The equivalence between Feynman transform and Verdier duality}

\author{Hao Yu}

\address{Department of Mathematics, University of Minnesota, Minneapolis, MN, 55455, USA}
\email{dustless2014@163.com}




\begin{abstract}

The equivalence between dg duality and Verdier duality has been
established for cyclic operads earlier. We propose a generalization of
this correspondence from cyclic operads and dg duality to twisted
modular operads and Feynman transform. Specifically, for each twisted
modular operad $\mathcal{P}$ (taking values in dg-vector spaces over a field $k$ of characteristic 0), there is a certain sheaf $\mathcal{F}$
associated with it on the moduli space of stable metric
graphs such
that the Verdier dual sheaf $D\mathcal{F}$ is associated with the Feynman transform $F\mathcal{P}$ of $\mathcal{P}$. In the course of the proof, we also prove a relation between cyclic operads and modular operads originally proposed in the pioneering work of Getzler and Kapranov; however, to the best knowledge of the author, no proof has been
given in any literature. This
geometric interpretation in operad theory is of fundamental
importance. We believe this result will illuminate many aspects of the theory of modular
operads and find many applications in the future. We illustrate an application of this result,  giving another proof on the homotopy properties of Feynman transform, which is quite intuitive and simpler than the original proof.

\end{abstract}

\maketitle
\section{Introduction}

The equivalence between cyclic operads with dg duality and Verdier
duality for certain sheaves on the moduli space of stable graphs has been proposed and proved by  Lazarev and Voronov \cite{lv}.
A {first
result in this direction} appeared earlier in the pioneering work of
{Ginzburg} and Kapranov (cf. section 3 in \cite{gk}), in which the authors
showed that dg-duality for operads has a geometric interpretation
in terms of Verdier duality for sheaves on buildings, i.e., the space of
metric trees. 
Specifically, Lazarev and Voronov
prove that for any cyclic operad
$\mathcal{P}$ there is a certain sheaf $\mathcal{F}$ associated with
it on the moduli space of stable graphs such that {the
  Verdier dual $D\mathcal{F}$ of $\mathcal{F}$ is associated with the
  dg dual $D\mathcal{P}$ of this operad}.  This result was motivated
by the relations of graph homology introduced by Kontsevich
(\cite{ko}) for different kinds of Koszul dual operads, and it gives
an important conceptual explanation of the appearance of graph
cohomology of both the commutative and Lie types in computations of
the cohomology of the outer automorphism group of a free group (see
\cite{cv}, \cite{p} and \cite{lv} for more details).
One hopes that the similar result holds for twisted modular operads and Feymann transforms, which are higher genus generalizations of cyclic operads and dg dualities. In this paper, we show that this is indeed true. More precisely, we show that a general correspondence holds for twisted modular operads with Feynman transform and Verdier duality on the moduli space of stable metric
graphs. In particular, the result in \cite{lv} can be considered as a special case of ours. {The extension is desired, 
as the space the previous correspondence built on is most naturally suited for modular operads context, albeit the methods for both proofs are similar}. Technically, our proof is almost a direct extension of the proof for cyclic operads, along with careful manipulation of orientation factors, complemented with a uniqueness result. Our method of proof differs from the one in \cite{lv} in three main aspects. First, in cyclic operads case the sheaves corresponding to graphs of genus greater than 0 are set to 0, while in modular operads case they are certainly not 0 and need to be calculated as a whole. Secondly, there is no "twist" in the result for cyclic operad (reflecting the fact that any twist on trees are trivial), while in our case we are dealing with twisted modular operads. The third, we further prove a uniqueness result for this correspondence, while in \cite{lv} it is not available. Although this extension is expected, to the best knowledge of the author, this is the first time this equivalence is clearly stated and rigorously proved.  

Dualities in other related structures, such as  Koszul duality for PROPs (\cite{v}), etc., have been proposed as well. 
This would be of fundamental importance as it would 
illuminate many aspects of the theory of operads (reformulating algebraic structures into geometric languages has already been shown to be powerful in many areas in mathematics).

As an example of the usefulness of our equivalence, we present a simple and intuitive proof of the property
that the Feynman transform is a homotopy functor and has a homotopy inverse. 
More examples and applications will be left as a subject to a future work. 

The paper is organized as follows. In section 2, for self-completeness, we discuss preliminaries
relevant to twisted modular operads, Feynman transform, combinatorial
formulation of Verdier duality, etc.
In section 3, we prove the main theorem, the equivalence between
Feynman transform and Verdier duality. The cyclic version of this equivalence is shown {to follow}
from our result. A simple proof on the homotopy property of
Feynman transform is given. Finally, we discuss future projects in the direction of this paper. 

\begin{notation}
Throughout this paper, $k$ will denote a {field of
  characteristic 0. A dg-vector space or chain complex over $k$ is a sequence $\dots
  \rightarrow V_{i+1} \rightarrow V_i \rightarrow V_{i-1} \rightarrow
  \dots$, where each $V_i$ is a vector space over $k$, and with a
  linear operator $d: V_i \rightarrow V_{i-1}$ such that $d^2 =
  0$ which is called the \emph{differential}. Similarly, a cochain complex is a sequence $\dots
  \rightarrow V^{i-1} \rightarrow V^i \rightarrow V^{i+1} \rightarrow
  \dots$, where each $V^i$ is a vector space over $k$, and with a differential $d: V^i \rightarrow V^{i+1}$ such that $d^2 =
  0$ }

The contravariant linear dual functor $V \rightarrow V^*$ taking the category of
chain complexes to the category of chain complexes is defined as
follows:
\begin{eqnarray}
V^*_i = (V_{-i})^*
\end{eqnarray}
with the differential $\quad d^*: V^*_i \rightarrow V^*_{i-1}$ such that the equality $d^*(v^*)(w) = v^*(dw)$ holds for $v^* \in (V_{-i})^*$ and $w \in V_{-i+1}$.  
A different contravariant dual functor $V \rightarrow
V^{\vee}$ taking the category of chain complexes to the category of
cochain complexes is defined as follows:
\begin{eqnarray}
(V^{\vee})^i = V^*_{i}
\end{eqnarray}
with the differential $d^{\vee}: (V^{\vee})^i \rightarrow (V^{\vee})^{i+1}$ such that the equality $d^{\vee}(v^*)(w)=v^*(dw) $ holds for $v^* \in V^*_i$ and $w \in V_{i+1}$.
 Clearly, $V^{* \vee} \cong V^{\vee *}$ and
$(V^{\vee *})^i \cong V_{-i}$.

Given a chain complex $V$, its \textit{shift} $V[n]$ is a chain
complex with $V[n]_i = V_{i+n}$ 
And given a cochain complex $V$, its \textit{shift} $V[n]$ is a cochain complex $(V[n])^i = V^{i+n}$.
{ Note that $(V[1])^* \cong V^*[-1]$ and
  $(V[1])^\vee \cong V^\vee [1]$.} The \textit{suspension} $\Sigma
V$ of $V$ is a complex with $\Sigma V_i = V_{i-1}$ or $\Sigma V^i =
V^{i-1}$ in the category of chain complexes or cochain complexes,
respectively.

An ungraded vector space V could be assumed to lie in degree 0, and it
will be clear from the context whether this (trivial) grading is
considered homological or cohomological. If dim $V = n$ we will call
{the one-dimensional graded vector space $Det(V ) =
  S^{n}(V[-1])=\Lambda^{n}(V)[-n]$, concentrated in dimension $n$, the
  determinant of $V$.}  We must emphasize that the grading of $Det(V)$ is different from the grading convention in \cite{gk2} and \cite{b}, whereas they use the grading of $-n$ instead. 
Note that $Det(V)^*[-2n]\cong Det(V^*)$. If $I$ is a finite set
{of} cardinality $|I|$, we define $Det(I) := Det(k^{|I|})$. We
use the negative power of {a one-dimensional} graded vector
space to denote the positive power of corresponding *-dual, so that
$Det^{-p}(V) = ((DetV)^*)^{\otimes p}$.

For a finite collection $\{V_\alpha | \alpha \in I\}$ of
finite-dimensional vector spaces, we have a natural identification
\begin{equation}
\bigotimes_{\alpha \in I}V_\alpha[-1] \cong Det(I) \otimes \bigotimes_{\alpha \in I}V_\alpha
\end{equation}
For a simplex $\tau$, the symbol $Det(\tau)$ denotes the determinant of the set of vertices of $\tau$.
When the ground filed $k=R$, a choice of a nonzero element in $Det(\sigma)$ up to a positive real factor is equivalent to providing $\sigma$ with an orientation in the usual sense.

Finally, throughout the rest of this paper, all the operads take values in symmetric monoidal category of dg-vector spaces over a fixed ground field $k$ of characteristic 0, unless otherwise specified.

\end{notation}

\section{Preliminaries}

We present preliminary results relevant to modular operads, Feynman
transform, Verdier duality and {other notions}. The experts are certainly free to skip over the first few subsections. We will omit proofs for majority of propositions discussed in this section, referring the interested readers to original papers, for example,    \cite{b},\cite{lv},\cite{gk2},\cite{gk3},\cite{snpr} and references therein for more details.
\subsection{(Twisted) modular operads and Feynman transform}
\subsubsection{Stable graphs}
{The notion of \textit{stable graphs} is intimately linked to the notion of \textit{stable curves}. Stable curves are used in the theory of compactification of the moduli space of smooth algebraic curves (aka Deligne-Mumford moduli space) (\cite{dm})}. 
Non-technically speaking, a \textit{stable graph} is a graph encoding incidence relations between various strata of this compactified moduli space, whence the name. The precise definition is as follows.

\begin{definition}(Stable graph) A \textit{graph} $G$ is specified by a finite set $H(G)$ (whose elements are called half edges), an involution $\sigma: H(G) \rightarrow H(G)$ such that $\sigma^2 = I$ and a partition $\lambda$ of $H(G)$ (By a partition of a set, we mean a
disjoint decomposition into several unordered, possibly empty, subsets, called blocks). We say that two half edges $a,b \in H$ meet if and only if they are equivalent under the partition $\lambda$. The $\textit{vertices}$ of $G$ are the blocks of the partition $\lambda$, and the set of them is denoted by $V(G)$. The subset of $H(G)$ corresponding to a vertex $v$ is denoted by $Leg(v)$. Its cardinality is called the valence of $v$, and denoted $n(v)$. The \textit{edges} of $G$ are the pairs of half edges forming the two-cycle of $\sigma$, and the set of them is denoted by $E(G)$. The cardinality of $E(G)$ is denoted by $|G|$. The legs of $G$ are the fixed-points of $\sigma$, and the set of them is denoted by $Leg(G)$ (note its difference with $Leg(v)$). cf. for example, \cite{gk3} for precise definitions. 

A \textit{stable graph} is a graph with labels on $V(G)$, $g: V(G) \rightarrow Z_{\ge 0}$, called the \textit{genus} of vertices, satisfying $2g(v)-2+n(v)>0$ for each vertex $v$. The geometric realization of a graph is a CW complex {denoted by $\mathcal{T}(G)$}. The genus of the graph $G$ is defined as $g(G) = \sum_{v\in V(G)}g(v) + dim_k(H_1(\mathcal{T}(G),k))$, where $H_1(\mathcal{T}(G),k)$ is the first homology group of the geometric realization of $G$ with coefficient $k$. {A \textit{tree} is a graph of genus 0}. {We denote by $n(G)$ the cardinality of $Leg(G)$} (cf. \cite{gk}).
\end{definition}


\begin{definition}(Morphism of graphs)
Let $G_0$ and $G_1$ be two graphs. A \emph{morphism} $f: G_0 \rightarrow G_1$ is an injection $f^*:H(G_1) \rightarrow H(G_0)$ such that
\begin{enumerate}
\item $\sigma_0 \circ f^* = f^* \circ \sigma_1$, where $\sigma_i, i = 0, 1$ are the involutions of $H(G_i)$.
\item $\sigma_0$ acts freely on the complement of the image of $f^*$ in $H(G_0)$ (i.e., $G_1$ is obtained from $G_0$ by contracting a subset of its edges).
\item two half edges $a$ and $b$ in $G_1$ meet if and only if there is a chain $(x_0,\ldots x_k)$ of half edges in $G_0$ such that $f^*a = x_0,\sigma_0x_{i-1}$ and $x_i$ meet for all $1\le i \le k$, and $f^*b = \sigma_0x_{k}$.
\end{enumerate}
\end{definition}
\begin{remark}
Generally speaking, the conditions (2) and (3) indicate that the morphisms of graphs are the compositions of isomorphisms and contractions of a set of edges, which satisfy the compatibility condition (1) (see also (2.14) of \cite{gk3}).
\end{remark}
Given such a morphism, we denote $f^{-1}(v)$ for a vertex $v \in G_1$ to be a stable subgraph of $G_0$, called the preimage of $v$,  consisting of those half edges in $G_0$ which are connected to a half edge in $Leg(v)$ by a chain of edges in $G_0$ being contracted by the morphism.

\begin{definition}(Contraction of graphs)
Let $I$ be a set of edges of a stable graph $G$. We define the contraction of $G$ along $I$ to be a unique stable graph, denoted by $G/I$,
satisfying the following properties.
\begin{enumerate}
\item $H(G/I)$ is obtained from $H(G)$ by deleting helf edges consisting of edges in $I$.
\item the inclusion $H(G/I) \hookrightarrow H(G)$ is a morphism of graphs $f_{G, I}: G \rightarrow G/I$.
\end{enumerate}
\end{definition}



\subsubsection{Operads and Cyclic operads}
\begin{definition}($S$-module) A \textit{$S$-module} is a collection of chain complexes of vector spaces over $k$ $\mathcal{P}(n)$ for $n \ge 0$, together with an $S_n$ action, where $S_n$ is a permutation group on $\{1,2,\ldots,n\}$.
A \textit{morphism of S-modules} $f : E \rightarrow F$ is a sequence of $S$-equivariant morphisms $f(l) : E(n) \rightarrow  F(n)$. We denote the category of S-module by $S-Mod$.

One can extend the definition of $\mathcal{P}$ to groupoid of finite sets and bijections via left Kan extension:

\begin{equation}
\mathcal{P}(I) = (\bigoplus_{\textrm{bijection}: I \leftrightarrow \{1,2,\ldots,n\}} \mathcal{P}(n))_{S_n}
\end{equation}
\end{definition}


\begin{definition}(Operad) \label{operad} A \textit{unital operad} (an \textit{operad} for short) is a $S$-module $P$ together with a family of structure morphisms:
\begin{equation}\label{operad_morphisms}
\gamma_{l;m_1,...,m_l}: P(l)\otimes P(m_1)\otimes \cdots \otimes P(m_l) \rightarrow P(m_1+\cdots +m_l)
\end{equation}
and unit:
\begin{equation}
\eta: 1 \rightarrow P(1)
\end{equation}
satisfying the axioms of equivariance, associativity and unit.
A \textit{morphism of operads} is a morphism of $S$-module compatible with the structure morphisms.
\end{definition}

For all $l \in N$, recall that the group $S^+_l := Aut\{0, 1, . . . , l\}$ contains $S_l$ as a subgroup, and it is generated by $S_l$ and the cyclic permutation of order $l+1, \tau_l : (0,1,\ldots,l) \rightarrow (1,2,\ldots,l,0)$.
\begin{definition} (Cyclic $S$-module) A \textit{cyclic $S$-module} is a collection of chain complexes of vector spaces over $k$ $\mathcal{P}(n)$ for $n \ge 0$, together with an $S^+_n$ action. We denote the category of cyclic $S$ module by $S^+-Mod$. Forgetting the cyclic permutation we have a functor
\begin{equation}
\rho: S^+-Mod \rightarrow S-Mod
\end{equation}
\end{definition}
\begin{definition} (Cyclic operad) A \text{cyclic operad} is a cyclic $S$-module $\mathcal{P}$ whose underlying $S$-module $\rho \mathcal{P}$ module has a structure of operad compatible with the action of $S^+_n$.
\end{definition}
\subsubsection{Modular operads}
A cyclic $S$-module $\mathcal{P}$ such that $\mathcal{P}(n)=0$ for $n\le 1$ can be regarded as a stable $S$-module $\tilde{\mathcal{P}}$ with $\tilde{\mathcal{P}}(0,n)=\mathcal{P}(n-1), \tilde{\mathcal{P}}(g,n)=0$ for $g\ge 1$. This point will be used in the later part of the paper.
\begin{definition}(Stable $S$-module) A \textit{stable $S$-module} is a collection of chain complexes of vector spaces over $k$ $\mathcal{P}(g,n)$ for $g,n\ge 0$, such that $\mathcal{P}(g,n)=0$ for $2g+n-2\le 0$, together with an $S_n$ action, where $S_n$ is a permutation group on $\{1,2,\ldots,n\}$.
A \textit{morphism of S-modules} $f : E \rightarrow F$ is a sequence of $S$-equivariant morphisms $f(l) : E(g,n) \rightarrow  F(g,n)$. We denote the category of stable S-module by $sS-Mod$.

Again, we extend the definition of $\mathcal{P}$ to groupoid of finite sets and bijections via left Kan extension:
\begin{equation}
\mathcal{P}(g,I) = (\bigoplus_{\textrm{bijection}: I \leftrightarrow \{1,2,\ldots,n\}} \mathcal{P}(g,n))_{S_n}
\end{equation}
\end{definition}
 For a stable graph $G$, we define
\begin{equation}
\mathcal{P}((G)) = \bigotimes_{v\in V(G)} \mathcal{P}(g(v),Leg(v))
\end{equation}
\begin{remark}
Note that it seems we have two different notations $P((G))$ and $P(g, n)$ for modular operads. This can be understood as follows: if $G$ is a one-vertex graph $*_{g, n}$ of genus $g$, valence $n$ and with no edges, then $P(g, n)$ is by definition the same as $P((*_{g, n}))$, so by this understanding we simply use the more natural symbol $P(g, n)$ to refer to $P((*_{g, n}))$. If $P$ is a cyclic operad, we ignore $g$ and use the similar notations $P(n)$ and $P((T))$ for a stable tree $T$ to mean the same thing. 
\end{remark}
\begin{remark}
The notion of stable $S$-module differs from (cyclic) $S$-module in that it has one more parameter $g$. If the $S$-module is the underlying $S$-module of the modular operad of stable curves, $g$ correspondes to the genus of the curves. 
A cyclic $S$-module $\mathcal{P}$ such that $\mathcal{P}(n)=0$ for $n\le 1$ can be regarded as a stable $S$-module $\tilde{\mathcal{P}}$ with $\tilde{\mathcal{P}}(0,n)=\mathcal{P}(n-1), \tilde{\mathcal{P}}(g,n)=0$ for $g\ge 1$. This point will be used in the later part of the paper.
\end{remark}
\begin{definition}(Modular operad) \label{modular_operad} A \textit{modular operad} is a stable $S$-module $\mathcal{P}$ together with, for each pair $(G,b)$ where $G$ is a stable graph with $g(G)=g, n(G)=n$ and $b$ is a bijection between $Leg(G)$ and $\{1,2,\ldots,n\}$, composition maps:
\begin{equation}\label{m_morphisms}
\mu^\mathcal{P}_{G,b}: \mathcal{P}((G)) \rightarrow \mathcal{P}(g,n)
\end{equation}
satisfying associativity with respect to the compositions in the category of stable graphs and equivariance with respect to the $S_n$ actions of relabelling the legs.
In particular, a \emph{cyclic operad} $P$ such that $P(n) = 0$ for $n\le 1$ can be regarded as a modular operad $\mathcal{\tilde{P}}$ with $\mathcal{\tilde{P}}(0,n)=\mathcal{P}(n-1), \mathcal{\tilde{P}}(g,n) = 0$ for $g$ $\ge$ $1$.
\end{definition}
\begin{remark} This definition of modular operad is from Barannikov(\cite{b}), which is equivalent to the definition in the original work of {Ginzburg} and Kapranov (\cite{gk}). We adopt this definition because it is more natural for the purpose of the paper and will simplify the proofs.
\end{remark}
Let $I$ be a finite set, similarly we can extend the definition of composition maps to stable graphs $G$ with legs marked by $I$:
\begin{equation}
\mu^\mathcal{P}_{G}: \mathcal{P}((G)) \rightarrow \mathcal{P}(g,I)
\end{equation}
Due to $S_n$ equivariance, the composition maps do not depend on the choice of bijection between $I$ and $\{1,2,\ldots,n\}$.

\subsubsection{Twisted modular operads}

The notion of twisted modular operads was introduced in \cite{gk}  to allow more flexibility to compose different operations.



\begin{definition}(Cocycle) A \textit{cocycle} is a functor $\rho$ from the category $Iso(g,n)$, $g,n \ge 0$, of the isomorphism classes of stable graphs of genus $g$ and valence $n$ and their isomorphisms to the Picard tensor symmetric category of invertible graded one dimensional vector spaces $\mathcal{P}\mathcal{I}\mathcal{C}$, such that:
\begin{enumerate}
\item To each morphism $f: G_1 \rightarrow G_2$ is associated an isomorphism:
\begin{equation}
\label{cocycle}
\phi_{G_1\rightarrow G_2}:\rho(G_2)\otimes \bigotimes_{v\in V(G_2)} \rho(f^{-1}(v)) \rightarrow \rho(G_1)
\end{equation}
\item For graphs $*_{g,n}$ with one vertex and no edges of genus $g$ and valence $n$, $\rho(*_{g,n}) = 1$, the unit object in $\mathcal{P}$$\mathcal{I}$$\mathcal{C}$.
\end{enumerate}
\end{definition}

For a cocycle $\rho$,  $\rho^{-1}$ is also a cocycle, and if $\rho_1,\rho_2$ are cocycles, $\rho_1 \otimes \rho_2$ is a cocycle. A typical example of a cocycle is $\rho(G) = Det(E(G))[2m]$, where $m=|E(G)|$. It is called  \textit{dualizing cocycle} and is denoted by $R$. Sometimes we also use $Det(G)$ to denote it in this paper. If $\rho$ is a cocycle, the \textit{dual} of $\rho$ is defined as the cocycle $R\otimes \rho^{-1}$, and is denoted by $\rho^{\vee}$. Another example of a cocycle is
\begin{equation}
\rho(G) = Det(H_1(\mathcal{T}(G),k))[2h]Det^{-1}(Leg(G))[-2l]
\end{equation}
where $h=dim_kH_1(\mathcal{T}(G),k)$ and $l=n(G)$.

\begin{definition}(Twisted modular operad) A $\rho$ twisted modular operad, or a modular $\rho$-operad, is a stable $S$-module $
\mathcal{P}$ together with, for each stable graph $G$, composition maps:
\begin{equation}
\mu^\mathcal{P}_{G}: \rho(G) \otimes \mathcal{P}((G)) \rightarrow \mathcal{P}(g,n)
\end{equation}
satisfying the associativity and equivariance conditions  parallel to the definition of modular operads.
\end{definition}

Given a morphism $f: G_1 \rightarrow G_2$, we define the \textit{structure maps}  $\mu^\mathcal{P}_{G_1 \rightarrow G_2}: \rho(G_1) \otimes \mathcal{P}((G_1)) \rightarrow \rho(G_2)\otimes \mathcal{P}((G_2))$ as follows:
\begin{equation}
\mu^\mathcal{P}_{G_1\rightarrow G_2}:= \bar{\phi}_{G_1\rightarrow G_2} \bigotimes \otimes_{v\in V(G_2)} \mu^\mathcal{P}_{f^{-1}(v)}
\end{equation}
where $\bar{\phi}_{G_1\rightarrow G_1}: \rho(G_1)\otimes_{v\in V(G_2)}\rho(f^{-1}(v))^{-1} \rightarrow \rho(G_2)$ is an isomorphism induced by the isomorphism $\phi_{G_1 \rightarrow G_2}$.

\subsubsection{Feynman transform}

\begin{definition}(Freely generated modular $\rho$-operad) Given a stable $S$-module $\mathcal{P}$, the modular $\rho$-operad $M_{\rho}\mathcal{P}$ freely generated by $\mathcal{P}$ is defined as follows:
\begin{equation}\label{free_twist}
      M_{\rho}\mathcal{P}(g,n) = \bigoplus_{G \in \Gamma(g,n)} (\rho(G) \otimes \mathcal{P}((G)))_{Aut(G)}
\end{equation}
where $\Gamma(g,n)$ is the set of isomorphism classes of pairs $(G,b)$ where $G$ is a stable graph with $g(G)=g, n(G)=n$ and $b$ is a bijection from $Leg(G)$ to $\{1,2,\ldots,n\}$.

The composition maps $\mu^\mathcal{M_{\rho}\mathcal{P}}_{G}$ of $M_{\rho}\mathcal{P}(g,n)$ are the canonical embeddings.

\end{definition}

One can extend the definition to modular operads with inputs labeled by any finite set $I$, by using covariant trick.
\begin{equation}
\label{MP}
M_{\rho}\mathcal{P}(g, I) = \bigoplus_{G \in \Gamma(g,n,I)} (\rho(G) \otimes \mathcal{P}((G)))_{Aut(G)}
\end{equation}
where $\Gamma(g,n,I)$ is the set of isomorphism classes of $G$ where $G$ is a stable graph with $g(G)=g, n(G)=n$ and legs marked by $I$.

\begin{definition}(Feynman transform) A Feynman transform of a modular $\rho$-operad $\mathcal{P}$ is a modular $\rho^{\vee}$-operad $F_{\rho}\mathcal{P}$, defined in the following way. As a stable $S$-module, forgetting the differential, $F_{\rho}\mathcal{P}$ equals $M_{\rho^{\vee}}\mathcal{P}$, the underlying stable $S$-module of the modular $\rho^\vee$-operad freely generated by stable $S$-module \{$\mathcal{P}(g,n)^{*}$\} according to \eqref{free_twist}. The differential on $F_{\rho}\mathcal{P}$ is the sum $d_{F} = \partial_\mathcal{P}^{*} + \partial_\mu$ of differential $\partial_\mathcal{P}^{*}$ dual to the differential $\partial_\mathcal{P}$ of $\mathcal{P}$ and of differential $\partial_\mu$, whose value on the term $(\rho^{\vee}(G)\otimes \mathcal{P}((G))^{*})_{Aut(G)}$ is the sum over all equivalence classes of stable graphs $\tilde{G}$ such that $\tilde{G}/{e}\cong G$ of the map dual to the structure map $\mu^\mathcal{P}_{\tilde{G}\rightarrow G}$ multiplied by an orientation map:
\begin{equation}
\label{orientation}
\epsilon : R(\tilde{G}) \rightarrow R(G)
\end{equation}
which is given by multiplying by the natural basis element $e$ of $Det({e})$. Specifically,
\begin{equation}
\label{diffFeynman}
\partial_\mu|_{(\rho^{\vee}(G)\otimes \mathcal{P}((G))^*)_{Aut(G)}} = \epsilon \bigotimes \sum_{\tilde{G}/{e}\cong G}  (\mu^\mathcal{P}_{\tilde{G}\rightarrow G})^{*} 
\end{equation}

The following diagram illustrates a direct summand of $\partial_\mu$ for a specific $\tilde{G}$:

\begin{equation*}
\xymatrix{
 R(\tilde{G}) \otimes \rho (\tilde{G})^* \otimes \mathcal{P}((\tilde{G}))^* \ar[rrr]^{\epsilon \otimes (\mu_{\tilde{G}\rightarrow G})^*} \ar[d]
 & & & R(G) \otimes \rho (G)^* \otimes \mathcal{P}((G))^* \ar[d] \\
\rho^{\vee}(\tilde{G})\otimes \mathcal{P}((\tilde{G}))^* \ar@{.>}[rrr]^{\partial_\mu |_{\tilde{G}}} & & & \rho^{\vee}(G)\otimes \mathcal{P}((G))^*
}
\end{equation*}

\end{definition}
See \cite{gk3} for more details.

\subsubsection{Dg duality}
Assume now $\mathcal{O}$ is a cyclic operad. A duality theory for cyclic operads is discussed in \cite{lv}, and this duality has nice relations with Feynman transform for twisted modular operads. This is a crucial ingredient towards proving that the result in \cite{lv} can be derived from our result in the next section. 

\begin{definition}\label{cobar_duality}(cobar duality) A cobar dual operad of a (dg) cyclic operad $\mathcal{O}$ is a (dg) cyclic operad $\mathcal{B}\mathcal{O}$ defined as follows. As a cyclic $S$-module, forgetting the differential the $\mathcal{B}\mathcal{O}$ is freely generated by $(\mathcal{O}((T)))^{*}$ where $T$ is a stable trees, i.e., a stable graph of genus 0.
The differential of $\mathcal{B}\mathcal{O}$ is exactly the same as \eqref{diffFeynman} except that $G$ and $\tilde{G}$ are replaced by stable trees $T$ and $\tilde{T}$, respectively.
\end{definition}

\begin{definition}\label{dg_duality}(dg duality) A dg dual operad of a (dg) cyclic operad $\mathcal{O}$ is a (dg) cyclic operad $D\mathcal{O}$ defined as follows. As a cyclic $S$-module, forgetting the differential the $D\mathcal{O}$ is freely generated by $(s\mathcal{O}[-1]((T)))^{*}$ where $T$ is a stable tree, i.e., a stable graph of genus 0, and $s\mathcal{O}$ is \textit{cyclic-operad suspension}(\cite{gk2}) defined by:
\begin{equation}
s\mathcal{O}(n):=Det^{-1}(k^{n+1})[-2]\otimes \mathcal{O}(n)
\end{equation}
The differential of $D\mathcal{O}$ is exactly the same as \eqref{diffFeynman} except that the $G$ and $\tilde{G}$ are replaced by stable tree $T$ and $\tilde{T}$, respectively.
\end{definition}

\begin{remark} In general, the suspension $s$ for a modular operad $\mathcal{P}$ is defined as $s\mathcal{P}(g,n):=\Sigma^{-2(g-1)}\otimes Det^{-1}(k^{n})\otimes \mathcal{P}(g,n)$.
\end{remark}

\subsection{Combinatorial formulation of Verdier duality}

The other side of this picture of equivalence is Verdier duality. The most important observation is that, on a certain space, Verdier duality has a combinatorial reformulation. This formulation was presented in a slightly different context of space stratified into cells in \cite{kp}, where it continues to hold in our setting of simplicial complex (\cite{lv}).

\subsubsection{Verdier duality}

On a finite dimensional locally compact space $X$, the dualizing complex $\Omega$ is defined by $\Omega := p^{!}(k)$ via $p: X \rightarrow \bullet$ (point). \textit{Verdier duality} is a contravariant functor from the bounded derived category of sheaves on a finite dimensional locally compact space $X$ to itself, $D: D^b(X) \rightarrow D^b(X)$,
defined by
\begin{equation}
 D(\mathcal{F}) = RHom(\mathcal{F}, \Omega)
\end{equation}
which have the following properties:
\begin{itemize}
\item \label{verdier_duality} There is a natural transformation $D^2(\mathcal{F}) \rightarrow id$.  This is an isomorphism
$D^2(\mathcal{F}) \cong \mathcal{F}$
in the bounded derived category of sheaves of constructible cohomology.  
\item $D(Rf_{*}(\mathcal{F})) \cong Rf_{!}D(\mathcal{F})$ where $f$ is a continuous map from $X$ to $Y$ and $f_{*}, f_{!}$ are push forward and direct image with compact support functor, respectively.
\end{itemize}
Recall that a constructible sheaf is a sheaf of abelian groups over some topological space $X$, such that X is the union of a finite number of locally closed subsets on each of which the sheaf is a locally constant sheaf of dg-vector spaces over $k$ of finite dimension in each degree. And by bounded derived category of sheaves of \textit{constructible cohomology} we mean that it is a bounded derived category of sheaves for which all cohomology are constructible sheaves.  A striking fact is that when $X$ is a complex algebraic variety, the bounded derived category of constructible sheaves is the same as the bounded derived category of sheaves of constructible cohomology, which is proved in \cite{n}. 



\subsubsection{Verdier duality on (orbi-)simplicial complex}

\begin{definition}(Constructible sheaf over finite simplicial complexes) Let $X$ be a finite simplicial complex. A sheaf of dg-vector spaces over a ground field $k$ on $X$ is called \textit{constructible} if its restriction to each face is a constant sheaf.
\end{definition}

Consider the category whose objects are faces of $X$ and morphisms are inclusions of faces. A \emph{coefficient system} on $X$ is a covariant functor from this category to the category of dg-vector spaces. 

The \textit{open star} $U_\sigma$ of a simplex $\sigma$ in $X$ is the union of interior of  those faces of $X$ which contain $\sigma$. Any sheaf determines a contravariant functor from the poset $\{U_\sigma\}$ into the category of dg-vector spaces. Conversely, any constructible sheaf is completely determined by the corresponding functor. 
In \cite{lv}, the proposition (1.3) shows that there is a one-to-one correspondence between constructible sheaves and coefficient systems on $X$, derived from the fact that the category of faces of $X$ is opposite to the category of open stars of $X$. See \cite{lv} for full details.




Our results are based on the setting of orbi-simplicial complex. 
Let $X$ be a finite-dimensional simplicial complex, and $G$ a group acting properly discontinuously and simplicially on $X$, which means that for each $x\in X$ the stabilizer $G_x$ is a finite group and there is a neighborhood $U_x$ of $x$ such that $gU_x \cap U_x = \emptyset$, and for each simplex $\sigma \in X$ and $g \in G$ the image $g(\sigma)$ is another simplex of $X$ and $g: \sigma \rightarrow g(\sigma)$ is an affine map. Let $Y$ be the orbit space $Y = X/G $.  $Y$ is glued from \textit{orbi-simplices}, i.e., the quotients of simplices by the action of finite groups, and there is one $n$-dimensional orbi-simplex in $Y$ for each orbit of the action of $G$ on the set of $n$ simplexes of $X$.
\begin{definition}
A sheaf $\mathcal{F}$ on $Y$ is called \textit{constructible} if $f^{-1}\mathcal{F}$ is a constructible sheaf on $X$.
\end{definition}

{Before presenting the formulation of Verdier duality on simplicial complex, we recall (see the Notation section) that the gradings of $Det(V)$ in \cite{lv} and \cite{gk,b} are different. In \cite{lv} it is concentrated in degree $dim(V)$ while in \cite{gk} and \cite{b} it is concentrated in degree $-dim(V)$. As the readers have probably seen in the preceeding paragraphs (e.g. the orientation cocycle), we adopt the convention of the \textit{former} in this paper}.

\begin{theorem}\label{orbi-verdier_duality}
 Let $X$ be a finite-dimensional orbi-simplicial (resp. simplicial) complex. On $X$, let $\mathcal{F}$ be a constructible sheaf and $D\mathcal{F}$ its Verdier dual, and for an orbi-simplex (resp. simplex) $\tau$ of $X$, let $\{\mathcal{F}_{\sigma}\}, \{D\mathcal{F}_{\sigma}\}$ be the corresponding coefficient systems. Then $D\mathcal{F}$ is represented by the constructible complex $\sigma  \rightarrow \mathcal{F}_{\sigma}$, where $D\mathcal{F}_{\sigma}$ is the following cochain complex:
\begin{equation}
\bigoplus_{\tau \supset \sigma}(\mathcal{F}_{\tau}\otimes Det(\tau)[1])^*
\end{equation}
whose differential is dual to
\begin{equation}
\label{orbi_vd_restriction}
\mathcal{F}_\sigma\otimes Det(\sigma)[1] \rightarrow \sum_{\substack{\tau \supset \sigma \\ dim \tau = dim \sigma + 1}} \mathcal{F_{\tau}}\otimes Det(\tau)[1]
\end{equation}
where the last map is induced by inclusions $\sigma \hookrightarrow \tau$.

\end{theorem}
\begin{remark} This theorem is a combination of propositions 1.5, 1.6, 2.4 and 2.5  in \cite{lv}. 
	Note that the similarity between \eqref{orbi_vd_restriction} and \eqref{diffFeynman} is suggestive, which might imply a correspondence between Feynman transform and Verdier duality.
\end{remark}

\section{Correspondence between Feynman transform and Verdier duality}
\subsection{Theory of Correspondence}
\subsubsection{Main theorem} \label{moduli_space_graphs}
We first introduce the notion of moduli space of stable metric graphs from \cite{lv}.  This paragraph is nearly completely borrowed from \cite{lv} so the readers can refer to their paper for full details and clear exposition. 

A \textit{metric graph} is a graph together with a function $l: E(G) \rightarrow R_{+}$, where $R_{+}$ is the set of nonnegative reals; $l(e)$ is called the length of the edge $e$. We consider the stable metric graph with $\sum_{e \in E(G)} l(e) = 1$. The moduli space of such graphs is shown in \cite{lv} to be an orbi-simplicial complex, where each orbi-simplex corresponds to an isomorphism class of a stable graph,  and collapsing the edges corresponds to passing to the faces of corresponding orbi-simplex. The dimension of the orbi-simplex equals the cardinality of the edge set of the corresponding graph minus one: dim($\tau$) = $|E(\Gamma_{\tau})| - 1$, where $\tau$ is an orbi-simplex, $\Gamma_{\tau}$ is the stable graph corresponding to $\tau$. We denote this moduli space by $M$.
	Let $\bar{Y_g}$ be the subset and also a sub-simplicial complex of $M$ corresponding to the stable graphs of genus $g$. $M$ is the disjoint union of all $\bar{Y_g}$. Let $Y_g$ denote the subset (not a sub-simplicial complex) of $\bar{Y_g}$ corresponding to stable metric graphs whose genus of each vertex is 0 (the genus of the stable graph may be greater than 0 due to the existence of loops or the graph is cyclic). Let $i:Y_g \to \bar{Y_g}$ be the corresponding inclusion. For details on how the orbi-simplicial structures of $\bar{Y_g}$ and $M$ are constructed we refer the readers to the original paper (\cite{lv}).

For a modular $\rho$-operad $\mathcal{P}$, we define a constructible complex $\mathcal{F}^{\mathcal{P}}$ associated with it on $M$ as follows. Generally speaking, the complex of sheaf $\mathcal{F}^{\mathcal{P}}$ is the complex over $k$ generated by $\mathcal{P}^{\vee}$, where the form of compositions and the final configuration is labelled by the stable graph $\Gamma$. It is therefore a direct summand of the "dual" modular operad of $\mathcal{P}$ in degree $g = g(G), n = n(G)$. Specifically, for each orbi-simplex $\sigma$ corresponding to graph $\Gamma$, we set $\mathcal{F}^{\mathcal{P}}_{\sigma}$ to be the cochain complex dual to the chain complex $\rho(\Gamma)\otimes \mathcal{P}((\Gamma))$:
\begin{equation}
\label{sheaf_asso_operad}
\mathcal{F}^{\mathcal{P}}_{\sigma} = (\rho(\Gamma)\otimes \mathcal{P}((\Gamma)))^{\vee}
\end{equation}

If $\tau \subset \sigma$ is a face inclusion, then the restriction map $\mathcal{F}^{\mathcal{P}}_{\tau} \rightarrow \mathcal{F}^{\mathcal{P}}_{\sigma}$ is defined to be the one dual to the structure map $\mu^\mathcal{P}_{\Gamma_{\sigma} \rightarrow \Gamma_{\tau}}$, where $\Gamma_{\tau}$ and $\Gamma_{\sigma}$ are graphs corresponding to orbi-simplexes $\tau$ and $\sigma$, respectively.

For each orbi-simplex $\sigma$ we define another complex of sheaves $\mathcal{E}$ by $\mathcal{E}_{\sigma} = \Rmnum{1}[-1]$, i.e., the scalar 1 concentrated in degree 1, whose effect is to shift the gradings to the right by 1. The reason we rewrite this degree shift as a sheaf is that it will be more convenient for the comparison between our main result \eqref{main theorem} and the correspondence result for cyclic operads in the theorem \eqref{lv main theorem}  below.

Now we state the main theorem, a correspondence between Feynman transform and Verdier duality:

\begin{theorem} \label{main theorem} Let $\mathcal{P}$ be a modular $\rho$-operad, then
\begin{enumerate}
\item In the derived category of sheaves over $k$ on $\bar{Y_g}$ (and equivalently, on $M$),
\begin{equation}
\label{thm}
\mathcal{F}^{F_{\rho}\mathcal{P}} \otimes \mathcal{E} \cong D\mathcal{F}^{\mathcal{P}}
\end{equation}
\item
For any $\mathcal{G}$ in the derived category of constructible sheaves on $\bar{Y_g}$, there is a unique, up to a cocycle factor, twisted modular operad $\mathcal{P}$ such that $\mathcal{F}^{\mathcal{P}} \cong \mathcal{G}$.
\end{enumerate}
\end{theorem}

\begin{proof}
We will evaluate \eqref{thm} on an orbi-simplex $\sigma$ and establish an isomorphism, natural with respect to isomorphisms of the corresponding graph $\Gamma_{\sigma}$.

By definition,
\begin{eqnarray}
\mathcal{F}^{F_{\rho}\mathcal{P}}_{\sigma} & = & (\rho^{\vee}(\Gamma_{\sigma}) \bigotimes F_{\rho}\mathcal{P}((\Gamma_{\sigma})))^{\vee} \\
 \label{equ}       & = & (\rho^{\vee}(\Gamma_{\sigma}) \bigotimes \otimes_{v\in V(\Gamma_{\sigma})} F_{\rho}\mathcal{P}(g(v),Leg(v)))^{\vee}
\end{eqnarray}

by \eqref{MP}, \eqref{equ} becomes,
\begin{equation}
\label{equ2}
 (\rho^{\vee}(\Gamma_{\sigma}) \bigotimes \otimes_{v\in V(\Gamma_{\sigma})} \oplus_{G \in \Gamma(g(v),n(v),Leg(v))} \rho^{\vee}(G) \otimes \mathcal{P}(g(v),Leg(v))^{*})^{\vee}
\end{equation}

Let us point out a subtlety for the definitions of dualities of operads. In cyclic operads context we use a suspension of an operad $s\mathcal{O}[-1]$ instead of pure $\mathcal{O}$ in the definition of dg duality of $\mathcal{O}$, resulting in an appearance of an additional orientation factor to be addressed carefully (see theorem 3.9 in \cite{lv}). However, in modular operads context the orientation factor is contained in \eqref{orientation}, which ensures that the calculation of orientation spaces is easier (the relation between these two cases will be discussed below).

Following \cite{lv}, we note that a graph  $\Gamma_{\sigma}$ with each vertex $v$ decorated by a stable graph $G$ with $g(G)=g(v), n(G)=n(v)$ and with $Leg(G)$ the same as $Leg(v)$ is literally the same as a graph $\Gamma_{\tau}$ with a collection of subgraphs, of which the union of the set of half edges equals the set of half edges of $\Gamma_{\sigma}$, such that contracting each of these subgraphs returns $\Gamma_{\sigma}$. We call $\Gamma_{\tau}$ the \textit{vertex expansion} of $\Gamma_{\sigma}$.

Let $\Gamma_{\tau}$ be a vertex expansion of $\Gamma_{\sigma}$, and $f: \Gamma_{\tau} \rightarrow \Gamma_{\sigma}$ be the morphism corresponding to contracting all the subgraphs.

then \eqref{equ2} becomes
\begin{eqnarray}
\label{eqn2}
&& (\rho^{\vee}(\Gamma_{\sigma}) \bigotimes \oplus_{\textit{vertex expansion } \Gamma_{\tau} \textit{of } \Gamma_{\sigma}} \otimes_{v\in V(\Gamma_{\sigma})}\rho^{\vee}(f^{-1}(v)) \otimes \mathcal{P}((\Gamma_{\tau}))^{*})^{\vee}
 \end{eqnarray}

by \eqref{cocycle} and  note that $Det(E(G))[2\chi]$ is a cocycle, where $\chi=|E(G)|$, \eqref{eqn2} becomes
\begin{eqnarray*}
            =  &  \bigoplus_{\textit{vertex expansion } \Gamma_{\tau} \textit{of } \Gamma_{\sigma}} (\rho^{\vee}(\Gamma_{\tau}) \otimes \mathcal{P}((\Gamma_{\tau}))^{*})^{\vee}     \\
             = &  \bigoplus_{\textit{vertex expansion } \Gamma_{\tau} \textit{of } \Gamma_{\sigma}} (Det(E(\Gamma_{\tau}))[2m] \otimes \rho(\Gamma_{\tau})^{*} \otimes \mathcal{P}((\Gamma_{\tau}))^{*})^{\vee}\\
             = &   \bigoplus_{\textit{vertex expansion } \Gamma_{\tau} \textit{of } \Gamma_{\sigma}} Det(E(\Gamma_{\tau}))[2m]^{\vee} \otimes (\rho(\Gamma_{\tau})\otimes \mathcal{P}((\Gamma_{\tau})))^{\vee *}\\
             = &   \label{left1} \bigoplus_{\textit{vertex expansion } \Gamma_{\tau} \textit{of } \Gamma_{\sigma}} Det(E(\Gamma_{\tau}))[2m]^{\vee} \otimes  (\mathcal{F}^{\mathcal{P}}_{\tau})^{*}
\end{eqnarray*}

 where $\tau$ is an orbi-simplex corresponding to $\Gamma_{\tau}$ and $m = |E(\Gamma_{\tau})|$.

On the other hand, by theorem \eqref{orbi-verdier_duality}
\begin{eqnarray}
D\mathcal{F}^{\mathcal{P}}_{\sigma} & = & \bigoplus_{\tau \supset \sigma} (\mathcal{F}^{\mathcal{P}}_{\tau}\otimes Det(\tau)[1])^{*} \\
                & = &  \bigoplus_{\tau \supset \sigma} Det(\tau)[1]^{*} \otimes (\mathcal{F}^{\mathcal{P}}_{\tau})^{*} \\
			& = &  \label{right2} \bigoplus_{\textit{vertex expansion } \Gamma_{\tau} \textit{of } \Gamma_{\sigma}} Det^{-1}(E(\Gamma_{\tau}))[-1] \otimes (\mathcal{F}^{\mathcal{P}}_{\tau})^{*}
\end{eqnarray}
the last equality follows from $m$ = $|V(\tau)|$ = $|E(\Gamma_{\tau})|$.

Let us calculate  $(Det(E(\Gamma_{\tau}))[2m]^{\vee})^{-1}Det^{-1}(E(\Gamma_{\tau}))[-1]$:
\begin{eqnarray*}
(Det(E(\Gamma_{\tau}))[2m]^{\vee})^{-1}Det^{-1}(E(\Gamma_{\tau}))[-1] & = & Det(E(\Gamma_{\tau}))^{\vee *}[-2m]Det^{-1}(E(\Gamma_{\tau}))[-1] \\
& = & Det(E(\Gamma_{\tau}))[2m][-2m]Det^{-1}(E(\Gamma_{\tau}))[-1] \\
& = & \Rmnum{1}[-1]
\end{eqnarray*}

So we conclude that \eqref{thm} holds when evaluating on the orbi-simplex $\sigma$.
Finally, by definitions \eqref{orbi_vd_restriction} and \eqref{diffFeynman}, it is obvious that the sheaf restriction maps on both side of \eqref{thm} are the same.

Conversely, for any $\mathcal{G}$ in the derived category of constructible sheaves with values in dg-vector spaces over $k$ on $\bar{Y_g}$, there is a twisted modular $\rho$-operad $\mathcal{P}$ such that $\mathcal{F}^\mathcal{P} \cong \mathcal{G}$. In fact, it is easy to observe that the data $\mathcal{G}|_\sigma$ for every orbi-simplex $\sigma$ and the restriction morphisms $\mathcal{G}|_{\sigma} \rightarrow \mathcal{G}|_{\tau}$ for every two orbi-simplices $\sigma \subset \tau$ exactly define a modular operad that we seek (compare with the definition \eqref{modular_operad}). However, it may not be true that a twisted modular operad corresponding to $\mathcal{G}$ is unique, even up to quasi-isomorphism. The reason is that one can change the cocylce by multiplying an arbitrary another cocycle $\tau$ with the original cocycle, and then multiplying the inverse of $\tau$ with the twisted modular operad. In fact, the twisted modular operad corresponding to $\mathcal{G}$ is unique up to a cocycle factor. The reasoning is as follows: if $\mathcal{P},\mathcal{Q}$ are two twisted modular operads such that $\rho(\Gamma_\sigma)\otimes \mathcal{P}((\Gamma_\sigma)) \cong \tau(\Gamma_\sigma)\otimes \mathcal{Q}((\Gamma_\sigma)) \cong \mathcal{G}$, then  $\mathcal{Q}((\Gamma_\sigma)) \cong (\tau^{-1}(\Gamma_\sigma)\otimes\rho(\Gamma_\sigma))\otimes \mathcal{P}((\Gamma_\sigma))$ for any orbi-simplex $\sigma$ of $\bar{Y_g}$, that is, they are differed by the cocycle $\tau^{-1}\otimes \rho$.
 \end{proof}
\begin{remark}
\label{func_thm}
By a straightforward check one can see that $\mathcal{F}$ is indeed a functor from the category of Modular $\rho$-operads with values in dg-vector spaces over $k$ of characteristic 0,
denoted by $dg$-$Mod_{\rho}Op$, to the derived category of constructible sheaves on each $\bar{Y_g}$ or on $M$ (after putting all $\bar{Y_g}$ together), denoted by $DSh(\bar{Y_g})$, and the following diagram commutes:
\begin{equation*}
 \xymatrix{dg-Mod_{\rho}Op  \ar[rrr]^{F_{\rho}} \ar[d]^{\mathcal{F}}
 & & & dg-Mod_{\rho^{\vee}}Op \ar[d]^{\mathcal{F}} \\
DSh(\bar{Y_g}) \ar[rrr]^{D[1]} & & & DSh(\bar{Y_g})
}
\end{equation*}
\end{remark}
For the convenience of the following discussions, we will define two more complexes of sheaves $\mathcal{K}$ and $\mathcal{L}$ as follows:
\begin{eqnarray*}
 & \mathcal{K}_{\rho}:= Det(E(\Gamma_{\rho}))[2m], \text{where } m=|E(\Gamma_{\rho})|
\\
 & \mathcal{L}_{\rho}:= Det^{-1}(E(\Gamma_{\tau}))[-1]
\end{eqnarray*}
 for an orbi-simplex $\rho \in \bar{Y_g}$. The following simple relation holds:
\begin{equation}
 \label{main_conn}
 \mathcal{E} = \mathcal{K}^{-1} \otimes \mathcal{L}
 \end{equation}

\subsubsection{Correspondence in cyclic operads context and the deduction of it from our results}

Lazarev and Voronov(\cite{lv}) formulated a correspondence between cyclic operads and Verdier duality. We will show that one can recover their results from ours. For reader's convenience we collect their results as follows:

Assume $\mathcal{O}$ is a cyclic operad. 
We define the following rules associating certain sheaves on $\bar{Y_g}$ and $Y_g$ with $\mathcal{O}$. \label{sheaf}
\begin{itemize}
\item For an orbi-simplex $\tau \in Y_g$, we define an auxiliary orientation sheaf $\mathcal{\bar{H}}_{\tau}:= Det^{-1}(H_1(\Gamma_{\tau},k))[-n]$. For each orbi-simplex $\tau \in \bar{Y_g} \backslash Y_g$, we set $\mathcal{\bar{H}}_{\tau} = 0$. The resulting sheaf $i^{-1}\mathcal{\bar{H}}$ on ${Y_g}$ will be denoted by $\mathcal{H}$.
\item For an orbi-simplex $\tau \in Y_g$, let $\mathcal{\bar{F}}^\mathcal{O}_{\tau}$ be the sheaf associated with $\mathcal{O}$ as defined in \eqref{sheaf_asso_operad} with $\rho$ a trivial cocycle, and for any orbi-simplex $\tau \in \bar{Y_g} \backslash Y_g$, we set $\mathcal{\bar{F}}^\mathcal{O}_{\tau} = 0$. The complex of sheaves $i^{-1}\mathcal{\bar{F}}$ on $Y_g$ will be denoted by $\mathcal{F}^\mathcal{O}$.
\end{itemize}
The main result in \cite{lv} is
\begin{theorem} \label{lv main theorem}
There is a canonical isomorphism in the derived category of sheaves on ${Y_g}$
\begin{equation}
\label{lv_thm}
D\mathcal{F}^\mathcal{O} \cong \mathcal{F}^{D\mathcal{O}}\otimes \mathcal{H}[4-3n]
\end{equation}
where $D\mathcal{O}$ is the dg dual operad of $\mathcal{O}$.
\end{theorem}
We now prove that theorem \ref{lv main theorem} follows from ours. Firstly, we give a first (to our knowledge) proof of a relation between the dg dual of cyclic operads and Feynman transform of the same operads regarded as twisted modular operads. This relation has been formulated in \cite{gk3} (see formula (5.9)), however, no proof is given there and, to the best knowledge of the author, in other literature. 
A cyclic operad $\mathcal{A}$ with $\mathcal{A}(n) = 0$ for $n \leq 1$ can be regarded as a modular operad $\mathcal{\tilde{A}}$ with $\mathcal{\tilde{A}}(0,n) = \mathcal{A}(n-1), \mathcal{\tilde{A}}(g,n) = 0$ for $g\ge 1$. The requirement $\mathcal{A}(n) = 0$ for $n \leq 1$ is an inconsistency, in the sense that we don't put this constraint for general cyclic operads. We will address this issue in the last part of this paragraph. {But to say a little bit more, I believe, there exists a version of \textit{pre-modular operads}, in which the stability condition on graphs is dropped, which is much similar to the notion of pre-stable curves, and Feynman transform on pre-modular operads can be defined similarly. \textit{Any} cyclic operad can be regarded as a pre-modular operad. However, the moduli space of  (unstable) metric graphs is an infinite (noncompact) orbi-simplicial complex, the original Verdier duality is not true in this space, so we probably can not expect such a correspondence (in its form presented in this paper) in this case. We will leave it as a subject for a future work}. Now, let $F(\mathcal{\tilde{A}})$ be the Feynman transform of $\tilde{A}$ regarded as a twisted modular operad with the trivial cocycle $\rho(G) = k$. Let $Cyc(\mathcal{P})$ be the genus 0 part of the modular operad $\mathcal{P}$, i.e., $Cyc(\mathcal{P})(n) = P(0,n+1), n\ge 1$. It is a cyclic operad by definition. Moreover, let $\widetilde{Cyc}(\mathcal{P})$ be a modular operad generated by ${Cyc}(\mathcal{P})$ : $\widetilde{Cyc}(\mathcal{P})(0,n):= Cyc(\mathcal{P})(n-1)$, $\widetilde{Cyc}(\mathcal{P})(g,n):=0$, $g\geq 1, n\ge 1$. We will prove the following proposition. Here is a subtlety we must clarify: it seems that we need a constraint: $\mathcal{A}(n)=0$ for $n\leq 1$, however, as both sides of the following proposition does not use the values of $\mathcal{A}(n), n\leq 1$ (since they only evaluate on stable graphs and stable trees), the position actually holds for all cyclic operads. More precisely, we will extend the definition of $\tilde{\mathcal{A}}$ to all cyclic operads by:
\begin{eqnarray*}
	&\tilde{\mathcal{A}}(0,1)=0 \\
	&\tilde{\mathcal{A}}(0,n)=\mathcal{A}(n-1),  n\geq 2 \\
	&\tilde{\mathcal{A}}(g,n)=0, g\geq 1
\end{eqnarray*}
with the structure morphisms \eqref{m_morphisms}.
	

Again, the following proposition is the formula (5.9) from Getzler and Kapranov (\cite{gk3})
\begin{proposition}
\begin{equation}
\label{f_d}
Cyc(F(\mathcal{\tilde{A}})) \cong \Sigma sD\mathcal{A}
\end{equation}
\end{proposition}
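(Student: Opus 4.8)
The plan is to unwind both sides of \eqref{f_d} as stable $S$-modules and then match the differentials. On the left-hand side, $F(\tilde{\mathcal{A}})(g,n)$ is the free modular operad on $\{\tilde{\mathcal{A}}(g,n)^*\}$ with the dualizing cocycle $R$; its genus $0$ part, by definition, is the direct sum over isomorphism classes of stable genus-$0$ graphs (i.e.\ trees) $T$ with legs labelled by $\{1,\dots,n\}$ of $(Det(E(T))\otimes \mathcal{A}((T))^*)_{Aut(T)}$, since $\tilde{\mathcal{A}}(0,m)=\mathcal{A}(m)$ and $\tilde{\mathcal{A}}(g,m)=0$ for $g\ge 1$ forces every vertex of a contributing graph to have genus $0$ and every loop to be absent (a loop would raise a vertex genus). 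On the right-hand side, $D\mathcal{A}$ is, as a cyclic $S$-module, free on $(s\mathcal{A}[-1](T))^*$ where $s\mathcal{A}(n)=Det^{-1}(k^{n+1})[-2]\otimes\mathcal{A}(n)$, so $s\mathcal{A}[-1](n)=Det^{-1}(k^{n+1})[-3]\otimes\mathcal{A}(n)$ and hence $D\mathcal{A}(T)=\bigotimes_{v}\big(Det^{-1}(k^{n(v)+1})[-3]\otimes\mathcal{A}(Leg(v))\big)$, dualized and coinvarianted. Applying $\Sigma s$ multiplies by $\Sigma^{\,2}\otimes Det^{-1}(k^n)$ in the appropriate genus-$0$ normalization; I will carry the determinant bookkeeping explicitly below.

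First I would reduce the comparison, vertex by vertex, to a single determinant identity on trees. The key combinatorial fact is that for a tree $T$ with vertex set $V(T)$, edge set $E(T)$, and $n$ legs, the half-edge count satisfies $\sum_{v}(n(v)+1) = |H(T)| + |V(T)| = (n + 2|E(T)|) + |V(T)|$ and, since $T$ is a tree, $|V(T)| = |E(T)|+1$. Hence $\bigotimes_v Det^{-1}(k^{n(v)+1})$ differs from $Det^{-1}(E(T))\otimes Det^{-1}(k^{n})$ only by a shift and a one-dimensional twist that is canonically trivial once one uses identity (3) from the Notation section, $\bigotimes_{\alpha\in I}V_\alpha[-1]\cong Det(I)\otimes\bigotimes_{\alpha\in I}V_\alpha$, to convert the per-vertex $[-1]$'s and $[-3]$'s into a global $Det(V(T))$ and overall shift. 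This is exactly the mechanism already used in the proof of the main theorem, where $Det(E(\Gamma))[2m]$ was recognized as a cocycle and absorbed; here the same recognition identifies the product of per-vertex operadic-suspension twists with the dualizing cocycle $Det(E(T))$ up to a uniform shift. The residual shift should come out to be the single $\Sigma$ (one suspension) plus the $[-3]\to[-2]$ adjustment encoded in $\Sigma s$, accounting for the $\Sigma s$ on the right.

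Next I would check that the differentials agree. Both complexes have differential of the form $d^* + \partial$: the internal part $d^*$ is dual to $d_{\mathcal{A}}$ on both sides and matches trivially. The combinatorial part on $Cyc(F(\tilde{\mathcal{A}}))$ is $\partial_\mu$ from \eqref{diffFeynman}, a sum over tree expansions $\tilde{T}/e\cong T$ of the dual structure map tensored with the orientation map $\epsilon: Det(\tilde{T})\to Det(T)$; the combinatorial part on $D\mathcal{A}$ is the cobar differential from Definition~\ref{dg_duality}, which is the very same formula with $G,\tilde{G}$ replaced by $T,\tilde{T}$. So once the underlying graded objects are identified via the determinant identity above, the differentials are manifestly the same maps. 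I expect the main obstacle to be the sign/shift bookkeeping: one must verify that the canonical isomorphisms produced by repeated application of Notation~(3) are compatible with the $Aut(T)$-coinvariants (so that the genus-$0$ equivariance built into the cyclic structure is respected) and that the accumulated shift is precisely $\Sigma s\,$ and not off by a further suspension or a $Det$-twist — essentially the same delicate point flagged in the Remark after Definition~\ref{dg_duality} about why a suspension appears for cyclic operads but not in the modular setting. I would pin this down by evaluating both sides on the corolla (a tree with one vertex and $n$ legs) as a normalization check, where $F(\tilde{\mathcal{A}})(0,n)\supset \mathcal{A}(n)^*$ in degree $0$ against $(s\mathcal{A}[-1](n))^* = \Sigma^{3}Det(k^{n+1})\otimes\mathcal{A}(n)^*$ twisted by $\Sigma s$, and confirm the degrees and determinant lines coincide; the general case then follows by the tree-expansion sum.
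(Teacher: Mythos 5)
Your proposal is correct and takes essentially the same route as the paper's proof: both sides are unwound as sums over stable trees of $(\mathcal{A}((T)))^{*}$ tensored with determinant lines, the mismatch is resolved by degree/determinant bookkeeping using $|V(T)|=|E(T)|+1$ and the twists carried by $\Sigma s$ and $Det^{-1}(k^{n(v)+1})[-3]$, and the differentials are identified because the cobar differential is by definition the tree case of \eqref{diffFeynman}. The paper organizes the bookkeeping by computing the total degree of the $|T|=t$ summand on each side ($-t$ versus $-t+n-3$ in the modular convention) rather than by your per-vertex-to-global determinant conversion, but this is the same computation.
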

Recall that $(sP)(g,n)  =  \Sigma^{-2(g-1)}\otimes Det^{-1}(k^n)\otimes P(g,n)$ for a modular operad $P$ and is called \textit{suspension} of $P$. When $O$ is a cyclic operad, $s\tilde{O}$ corresponds to the usual notion of suspension defined for cyclic operads $(sO)(n) = \Sigma^{1-n}O(n)$, under the usual convention correspondence: 
\begin{equation}
 O(n) = \tilde{O}(0,n+1) 
\end{equation}
i.e.,
\begin{equation}
Cyc(s\tilde{O})(n) = sO(n), \quad\text{for}\quad n \geq 2
\end{equation}
\begin{proof}
By definition (cf. \cite{gk3}), the left hand side of \eqref{f_d} is
\begin{eqnarray*}
 F(\tilde{A}) =
 & \cdots \rightarrow \Sigma_{G\in \Gamma(g,n),|G|=t}(\rho(G)\otimes \tilde{A}((G)))^*\bigotimes Det(G) \\
& \rightarrow   \Sigma_{G\in \Gamma(g,n),|G|=t+1}(\rho(G)\otimes \tilde{A}((G)))^*\bigotimes Det(G) \rightarrow \cdots
\end{eqnarray*}
where the term for $|G| = t$ is of degree $-t$.
Thereby, $Cyc(F(\tilde{A}))$ is
\begin{equation}\label{left}
\begin{split}
Cyc(F(\tilde{A})) = &
\cdots \rightarrow \Sigma_{T\in \Gamma(0,n),|T|=t}(\rho(T)\otimes \tilde{A}((T)))^*\bigotimes Det(T) \\
 & \rightarrow \Sigma_{T\in \Gamma(0,n),|T|=t+1}(\rho(T)\otimes \tilde{A}((T)))^*\bigotimes Det(T) \rightarrow \cdots
\end{split}
\end{equation}
where $\Gamma(0,n)$ denotes the set of trees $T$ such that $n(T)=n$.
The term for $|T|=t$, in which the tree $T$ has the number of edges $t$, is again, in degree $-t$.
On the other side, the dg-dual of a cyclic operad $O$ is defined in \eqref{dg_duality} as follows (cf. \cite{lv},\cite{gk})
\begin{equation}
\label{dg_dual}
\mathcal{D}(O)(n) = \Sigma_{T\in \Gamma(0,n+1)}(sO[-1]((T)))^* 
\end{equation}
which means that the dg-dual operad a cyclic operad is generated by the linear-dual of a suspension and shifted version of $O$ evaluated over trees. Using the obvious identity $A((T)) \cong \rho(T)\otimes \tilde{A}((T))$ for any tree $T$, 
we calculate $DA(n)$ as follows
\begin{equation}\label{right}
\begin{split}
DA(n) & =  \cdots \rightarrow \Sigma_{T\in \Gamma(0,n+1),|T|=t} (\rho(T)\otimes s\tilde{A}[-1]((T)))^*\bigotimes Det(T) \\
& \rightarrow \Sigma_{T\in \Gamma(0,n+1),|T|=t+1} (\rho(T)\otimes s\tilde{A}[-1]((T)))^*\bigotimes Det(T) \rightarrow \cdots
\end{split}
\end{equation}
And the degree for the term at $|T|=t$ is $-t$ as well. Note that $A$ is taking values in dg-vector spaces over $k$, thus each expression $F(\tilde{A})$ and $\mathcal{D}(A)$ is a double complex. The final gradings of $F(\tilde{A})$ and $\mathcal{D}(A)$ are the gradings of the total complex of the double complex respectively. Thus \eqref{left} and \eqref{right} only differs by grading: the latter is the shift of the former by degree $n-2$. 
{Note that the formulas \eqref{left} and \eqref{right} use different conventions for the $n$-th component: the former is modular operads convention and the latter is cyclic operads convention}. To make the convention consistent, we adopt the modular operad convention for the $n$-th component of both operads. Thereby, the corresponding degree shift on the right side of \eqref{dg_dual} for the term at $|T|=t$ is $n-3$. {From here}, 
we can use the identity $\Sigma sP(0,n)$ $\cong$ $\Sigma Det^{-1}(k^{n})[-2]\otimes P(0,n)$ for $P(0,n) = DA(n-1)$, which can cancel out the degree shift on the right hand side of \eqref{right}. This completes the proof.
\end{proof}


We can now prove that \eqref{lv main theorem} is a special case of the main theorem \eqref{main theorem}.

\begin{proposition}
The result \eqref{lv main theorem} can be derived from our main theorem \eqref{main theorem}. 
\end{proposition}


Since we are working with twisted modular operads and cyclic operads these two contexts, to emphasize their differences, we will use $\mathcal{F}_{O}$ and $\mathcal{F}_{P}$ to denote the complex of sheaves defined before associated with cyclic operads and twisted modular operads, respectively.
\begin{proof}
For a cyclic operad $A$ we regard it as a modular operad $\tilde{A}$ described above, which is also a twisted modular operad with trivial twist.
By our main theorem \eqref{main theorem}
\begin{equation}
\label{thm_2}
\mathcal{F}_P^{F(\mathcal{\tilde{A}})} \otimes \mathcal{E} \cong D\mathcal{F}_P^{\mathcal{\tilde{A}}}
\end{equation}


Let us consider $\mathcal{F}_P^{ \widetilde{Cyc}(F(\tilde{A}))}$.
By the definition of $\widetilde{Cyc}$ operator, it is straightforward to see that on $Y_g$ it is isomorphic to the first factor in the left hand side of \eqref{thm_2} (as the genus of all vertices of $Y_g$ are 0), and is 0 on $\bar{Y_g} \backslash Y_g$. For any graph $G \in \Gamma(g, n)$, if there exists a vertex $v\in V(G)$ for which the genus is greater than 0, then by the definition of Feynman transform,  $F(\tilde{A})((G))$ is 0 as $A$ is a cyclic operad. It follows that $\mathcal{F}_P^{F(\tilde{A})}$ is 0 on $\bar{Y_g} \backslash Y_g$. We thus get:
\begin{equation}
\label{connect}
i_!i^{-1}\mathcal{F}_P^{F(\tilde{A})} \cong \mathcal{F}_P^{\widetilde{Cyc}(F(\tilde{A}))} 
\end{equation}
as $\mathcal{F}_P^{\widetilde{Cyc}(F(\tilde{A}))} \cong \mathcal{F}_P^{F(\tilde{A})}$, and $i_!i^{-1}\mathcal{F}_P^{F(\tilde{A})} \cong \mathcal{F}_P^{F(\tilde{A})}$, the latter being similar to remark 3.6 in \cite{lv}.

As $F(\tilde{A})$ is modular-$R$ operad, where $R$ is the dualizing cocycle, for each orbi-simplex $\rho \in Y_g,{\mathcal{F}_P}_{\rho}^{\widetilde{Cyc}(F(\tilde{A}))} = (Det(E(\Gamma_{\rho}))[2m] \otimes {\widetilde{Cyc}(F(\tilde{A}))}((\Gamma)))^{\vee}$, where $m=|E(\Gamma_{\rho})|$, and 
 ${\mathcal{F}_O}_{\rho}^{\Sigma sD\mathcal{A}} = (\Sigma sD\mathcal{A}((\Gamma_{\rho})))^{\vee}$. We define a complex of sheaves $\mathcal{K}$ by $\mathcal{K}:= Det(E(\Gamma_{\rho}))[2m]$. Then
by \eqref{f_d},
\begin{equation*}
\mathcal{F}_O^{\Sigma sD\mathcal{A}} \otimes \mathcal{K} \cong \mathcal{F}_P^{\widetilde{Cyc}(F(\tilde{A}))}
\end{equation*}

On the right-hand side of \eqref{thm}, it is also that
\begin{equation}
\label{conn5}
 i_!i^{-1}D\mathcal{F}_P^{\tilde{A}} \cong D\mathcal{F}_O^{A}
\end{equation}
on $\bar{Y_g}$. In fact, similarly to the derivation of \eqref{connect},  we have $D\mathcal{F}_P^{\tilde{A}} \cong D\mathcal{F}_O^{A}$ and $D\mathcal{F}_O^{A} = 0$ on $\bar{Y_g} \backslash {Y_g}$,
as $\mathcal{\tilde{A}}(0,1) = 0$, $\mathcal{\tilde{A}}(0,n) = \mathcal{A}(n-1)$ for $n \ge 2$ and $\mathcal{\tilde{A}}(g,n) = 0$ for $g \geq 1, n\geq 0$.


Now from \eqref{thm} and all the isomorphisms of different complexes of sheaves established above, we have:
\begin{eqnarray}
\label{conns}
\mathcal{F}_O^{\Sigma sD\mathcal{A}} \otimes \mathcal{K} \otimes \mathcal{E} & \cong & \mathcal{F}_P^{\widetilde{Cyc}(F(\tilde{A}))} \otimes \mathcal{E} \\
& \cong & i_!i^{-1}\mathcal{F}_P^{F(\tilde{A})}\otimes \mathcal{E} \\
& \cong & i_!i^{-1}D\mathcal{F}_P^{\tilde{A}} \\
& \cong & D\mathcal{F}_O^{A}
\end{eqnarray}

Let us identify $\mathcal{F}_O^{\Sigma sD\mathcal{A}}$. Note that it is isomorphic to
\begin{equation}
\label{conn2}
\mathcal{F}_O^{\Sigma s} \otimes \mathcal{F}_O^{D\mathcal{A}}
\end{equation}
where, by a slight abuse of notations, we identify $\mathcal{F}_O^{\Sigma s}$ with $\mathcal{F}_O^{\Sigma sI}$ where $I$ is the trivial cyclic operad.
Let us calculate the first factor of \eqref{conn2}, the inverse of which is exactly computed in \cite{lv}. 
For any stable graph $\Gamma$ corresponding to an orbi-simplex $\rho$, by definition
\begin{eqnarray}
{\mathcal{F}_O}_{\rho}^{\Sigma s} & = & (\Sigma sI((\Gamma)))^{\vee} \\
\label{conn4}
& = & \bigotimes_{v\in V(\Gamma)} Det^{-1}(H(v))[-3]
\end{eqnarray}

To compute \eqref{conn4}, we follow the argument in \cite{lv} which calculates the inverse of it
\begin{eqnarray*}
\bigotimes_{v\in V(\Gamma)} Det(H(v))[3] & \cong & Det^{-3}(V(\Gamma)) \otimes \bigotimes_{v\in V(\Gamma)}Det(H(v)) \\
& \cong & Det^{-1}(V(\Gamma))[2v(\Gamma)] \otimes \bigotimes_{v\in V(\Gamma)}Det(H(v))
\end{eqnarray*}
where $v(\Gamma) = |V(\Gamma)|$. Note that the set $\prod_{v\in V(\Gamma)}H(v)$ is naturally isomorphic to
the set $\prod_{e\in E(\Gamma)}H(e)$, where $H(e)$ is the set of (two) half edges making up an edge $e$,
as both sets count the set of half-edges $H(\Gamma)$ of the graph, the former by grouping the set of half-edges by vertices, the latter by edges. By passing to determinants, we obtain

\begin{equation}
\label{determinant}
\bigotimes_{v\in V(\Gamma)} Det(H(v))[3] \cong Det^{-1}(V(\Gamma))[2v(\Gamma)] \otimes
\bigotimes_{e\in E(\Gamma)}Det(H(e))
\end{equation}
Note that the exact sequence
\begin{equation*}
0 \rightarrow H_1(\Gamma) \rightarrow C_1(\Gamma) \rightarrow C_0(\Gamma)
\rightarrow H_0(\Gamma) \rightarrow 0
\end{equation*}
yields a canonical isomorphism
\begin{equation*}
Det H_0(\Gamma) \otimes Det^{-1}H_1(\Gamma) \cong C_0(\Gamma) \otimes Det^{-1}C_1(\Gamma)
\end{equation*}
Note that the homology groups and chain complexes in the above exact sequence are manipulated similar to how cellular homology is defined, in the usual sense of regarding $\Gamma$ as a 1-dimensional CW complex.

Moreover, we have the following natural isomorphisms:
\begin{eqnarray*}
Det C_0(\Gamma) & \cong & Det V(\Gamma) \\
Det C_1(\Gamma) & \cong & \bigotimes_{e\in E(\Gamma)} Det(H(e))[1] \\
& \cong & Det^{-1}E(\Gamma)\otimes \bigotimes_{e\in E(\Gamma)}Det H(e) \\
Det(H_0(\Gamma)) & \cong & k[-1]
\end{eqnarray*}
We conclude that the last expression in \eqref{determinant} is isomorphic to
\begin{eqnarray*}
& & Det(E(\Gamma))\otimes Det(H_1(\Gamma)) \otimes Det^{-1}(H_0(\Gamma))[2v(\Gamma)] \\
&    \cong &Det^{-1}(E(\Gamma))\otimes Det(H_1(\Gamma)) \otimes Det^{-1}(H_0(\Gamma))[
2v(\Gamma)-e(\Gamma))]
\end{eqnarray*}

This conclusion immediately implies that we have an isomorphism of complexes of sheaves
\begin{equation}
\label{key}
  \mathcal{H} \cong \mathcal{F}_O^{\Sigma s} \otimes \mathcal{L}
\end{equation}
 According to the relations (of sheaves) \eqref{main_conn} and \eqref{conn2}, one can put \eqref{key} in the left hand side of \eqref{conns}, directly resulting in
\begin{equation*}
\mathcal{F}_O^{DA} \otimes \mathcal{H} \cong D\mathcal{F}_O^{A}
\end{equation*}
and completing the proof.

\end{proof}


\subsection{Applications}

One of the most important properties of Feynman transform is that it is a homotopy functor and it has a homotopy inverse. The original proof (\cite{gk3}) of this property is relatively complicated (cf. theorem(5.3) and (5.4) in \cite{gk3}). 
We demonstrate an application of our main theorem by {presenting a proof on} this homotopy property of Feynman transform in a quite simple way.

{Recall (from the subsection of notation) that the operads discussed in this paper are valued in the symmetric monoidal category of dg-vector spaces over a field $k$ of characteristic 0}.
\begin{theorem}\label{homotopy_ft}
If $\mathcal{P}$ is a modular $\rho$-operad, and $F_{\rho}$ is Feynman transform functor, then
\begin{enumerate}
\item   $F_{\rho}$ is a homotopy functor: if $f : A  \rightarrow B$ is a weak equivalence of modular $\rho$-operads (that is, it induces an isomorphism in homology), then so is $F_{\rho}f : F_{\rho}B \rightarrow  F_{\rho}A$. \label{1}
\item   Assume that, in each degree of $\mathcal{P}(g,n)$, it is a finite dimensional vector space over $k$, then the canonical map $\tau: F_{\rho^{\vee}}F_{\rho}\mathcal{P} \rightarrow \mathcal{P}$ is a weak equivalence. \label{2}
\end{enumerate}
\end{theorem}
\begin{proof}
Using main theorem \eqref{main theorem}, the proof of \eqref{1} is straightforward. Recall that $*_{g,n}$ is the graph with one vertex and no edges of genus $g$ and valence $n$. By our construction and remark \eqref{func_thm} on the naturality of the correspondence, the weak equivalence $f$ induces a weak equivalence between $\mathcal{F}^A_{\rho}|_{*_{g,n}}$ and $\mathcal{F}_{\rho}^B|_{*_{g,n}}$(in fact, by Kunneth formula, evaluating on all stable graphs in $\bar{Y_g}$ results in an isomorphism between $\mathcal{F}^A_{\rho}$ and $\mathcal{F}_{\rho}^B$ in the derived category of sheaves), then the Verdier duality also induces a weak equivalence between $D\mathcal{F}^A_{\rho}|_{*_{g,n}}$ and $D\mathcal{F}^B_{\rho}|_{*_{g,n}}$(isomorphisms in a derived category are weak equivalences), which, is exactly $F_{\rho}f$ restricted on the $(g,n)$ component by our construction, so $F_{\rho}f$ is a weak equivalence.

Through the proof of \eqref{2}, we need the following simple lemma:
\begin{lemma}
Let $V$ and $W$ be two (co)chain complexes valued in vector spaces over the field $k$
which are finite dimensional in each degree, and $f: V \rightarrow W$ be a weak equivalence between $V$ and $W$, then the induced dual map $f^\vee: W^\vee \rightarrow V^\vee$, defined by
$f^\vee (w^*)(v^*) = w^*(f(v))$ for any $w\in W, v\in V $ and their duals $ w^* \in W^\vee, v^* \in V^\vee$, is a weak equivalence between $W^\vee$ and $V^\vee$.
\end{lemma}

One can deduce this lemma from the following fact: without loss of generality we let $C$ be a chain complex:
 $\cdots \rightarrow v_{i+1} \rightarrow v_i \rightarrow v_{i-1} \rightarrow \cdots$, then there is a natural isomorphism between homology groups $H_{i}(C,k)^{\vee}$ and $H_{i}(C^{\vee},k)$. From the finite dimension property of $V$ and $W$ we know that $H_{i}(V,k)$ and $H_{i}(W,k)$ are finite dimensional for each $i\ge 0$, and then by the naturality of the functor and elementary linear algebras, it is immediately that the induced dual map $H_{i}(W^{\vee},k) \rightarrow H_{i}(V^{\vee},k)$ is an isomorphism, thereby $f^{\vee}: W^{\vee} \rightarrow V^{\vee}$ is a weak equivalence. Since the proof of this fact is straightforward, we will leave it to the readers.

As $\mathcal{P}$ is a modular $\rho$-operad, $E = F_{\rho}\mathcal{P}$ is a modular $\rho^{\vee}$-operad, therefore by the main theorem \eqref{main theorem}, we have
\begin{equation}
\mathcal{F}^{F_{\rho^{\vee}}F_{\rho}\mathcal{P}} = \mathcal{F}^{{F_{{\rho}^{\vee}}}E} \cong D\mathcal{F}^E[1] = \mathcal{F}^{ F_{\rho}\mathcal{P}}[-1][1] \cong DD\mathcal{F}^{\mathcal{P}}
\end{equation}
By our construction, $\mathcal{F}^Q$ is a constructible sheaf for any twisted modular operad $Q$,  
 so from Verdier duality \eqref{verdier_duality},  we have:
\begin{equation}
 DD\mathcal{F}^{\mathcal{P}} \cong \mathcal{F}^\mathcal{P}.
\end{equation}
Therefore, in the derived category of sheaves on $\bar{Y_g}$,
\begin{equation}
\mathcal{F}^{F_{\rho^{\vee}}F_{\rho}\mathcal{P}} \cong \mathcal{F}^\mathcal{P}
\end{equation}
Let $f: \mathcal{F}^{F_{\rho^{\vee}}F_{\rho}\mathcal{P}} \cong \mathcal{F}^\mathcal{P}$ be any such isomorphism in the derived category of sheaves on $\bar{Y_g}$. Since an isomorphism in any derived category is the one which induces an isomorphism on all cohomology groups, $f$ induces an isomorphism on all cohomology groups of the sheaves(note that this is the sheaf formed by cohomology groups of dg-vector spaces in which the sheaf takes values, rather than the sheaf cohomology). 
Notice that $*_{g,n}$ is the "fundamental building block" that generates $F_{\rho}\mathcal{P}$. The evaluation of $\mathcal{F}^{F_{\rho^{\vee}}F_{\rho}\mathcal{P}}, \mathcal{F}^\mathcal{P}$ on the corresponding orbi-simplex $\sigma_{g,n}$ of $*_{g,n}$ is $(F_{\rho^{\vee}}F_{\rho}\mathcal{P}(g, Leg(*_{g,n})))^{\vee}$ and $(\mathcal{P}(g, Leg(*_{g,n})))^{\vee}$, respectively. As $f$ induces an isomorphism between the cohomology of the sheaves, it is a weak equivalence  $f|_{\sigma_{g,n}}:  (F_{\rho^{\vee}}F_{\rho}\mathcal{P}(g, Leg(*_{g,n})))^{\vee} \rightarrow (\mathcal{P}(g, Leg(*_{g,n})))^{\vee} $. It is known that if $V$ is a chain complex over a field $k$ which has finite dimension in each degree, then $V^{\vee\vee} \cong V$,
and if $h: V \rightarrow W$ is a weak equivalence for such chain complexes $V$ and $W$, we have an induced weak equivalence $h^{\vee}: V^{\vee} \rightarrow W^{\vee}$(neither statement is true if $V$ or $W$ is not finite dimensional in some degree). Thus, under the hypothesis that any component $\mathcal{P}(g,n)$ is finite dimensional in each degree, $f$ also induces an weak equivalence $g:  F_{\rho^{\vee}}F_{\rho}\mathcal{P}(g, Leg(*_{g,n})) \rightarrow \mathcal{P}(g, Leg(*_{g,n})) $.
In particular, $f$ induces a weak equivalence
\begin{equation}
\label{weak_equiv}
F_{\rho^{\vee}}F_{\rho}\mathcal{P}(g, n) \rightarrow \mathcal{P}(g, n)
\end{equation}
Having done all the above, it remains to show that the weak equivalence \eqref{weak_equiv} induced by $f$ is indeed a morphism of modular $\rho$-operads. This is implied by the fact that the restriction maps of the associated sheaves are defined to be the dual of the structure maps of the corresponding twisted modular operads.
\end{proof}


There are now several recently proposed approaches to generalised operadic structures. All of them include modular operads as an example. Namely, an approach based on polynomial monads (\cite{bb}),  
Feynman categories (\cite{kw, kw2, bkw}) and operadic categories (\cite{bm, bm2, bmo}). It is interesting to investigate if our equivalence theorems can be extended  to  these general frameworks, and this will be left as a subject to a future work.  

\section*{Acknowledgement}
{We are}
grateful to the anonymous referee of the paper \cite{lv} who suggested
{the}
generalization {discussed in the paper}, demonstrated its
potential importance and outlined a {plan of
proving} it, although {we didn't follow that approach and have used our own}
method in this paper. We also thank Sasha Voronov for providing us the referee's report of his paper.


\begin{thebibliography}{3}
\bibliographystyle{plain}

\bibitem[B]{b} S.Barannikov, \emph{Modular operads and Batalin-Vilkovsky geometry}. Internat. Math. Res. Notices 2007, no.19, Art. ID rnm075, 31 pp.
\bibitem[B2]{ba} M.A.Batanin, \emph{Monoidal globular categories as a natural environment for the theory of weak n-categories}.
Adv. Math., 136(1):39C103, 1998.
\bibitem[BB]{bb} Batanin and Berger, \emph{Homotopy theory of algebras over polynomial monads}. Theory Appl. Categ., Vol. 32, No. 6, 148-253, 2017.
\bibitem[BKW]{bkw} M.Batanin, J.Kock, M.Weber, \emph{Regular patterns, substitudes, Feynman categories and operads}. Theory Appl. Categ. Vol. 33, 148–192, 2018 

\bibitem[BM]{bm} M.Batanin, M.Markl, \emph{Operadic categories and Duoidal Deligne's conjecture}. Adv. Math., 285:1630-1687, 2015.

\bibitem[BM2]{bm2} Batanin and Markl, \emph{Kozsul duality in operadic categories} arXiv:1812.02935.

\bibitem[BMO]{bmo} M.Batanin, M.Markl and J.Obradovich, \emph{Minimal models for graphs-related operadic algebras} arXiv:2002.06640.
\bibitem[CA]{ca} J.Chuang, A.Lazarev. \emph{Dual Feynman transform for modular operads}. to appear, Comm. in Number
Theory Phys., arXiv:0704.2561.

\bibitem[CV]{cv} M.Culler, K.Vogtmann, \emph{Moduli of graphs and automorphisms of free groups}. Invent. Math.84 (1986), no.1, 91-119.

\bibitem[DM]{dm} P.Deligne, D.Mumford, \emph{The irreducibility of the space of curves of given genus}. Inst. Hautes Etudes Sci. Publ. Math. (1969), no. 36, 75šC109.

\bibitem[GinK]{gk} V.Ginzburg, M.Kapranov, \emph{Koszul duality for Operads}. Duke Math. J. 76 (1994),no.1 203-272.

\bibitem[GetK1]{gk2} E.Getzler, M.Kapranov, \emph{Cyclic operads and cyclic homology}. Geometry  topology \& physics, 167-201, Conf. Proc. Lecture Notes Geom. Topology, IV, Int. Press, Cambridge, MA, 1995.

\bibitem[GetK2]{gk3} E.Gertzler, M.Kapranov, \emph{Modular Operads}. Compos. Math. 110(1998), no.1, 65-126.

\bibitem[K]{ko} M.Kontsevich, \emph{Formal (non)-commutative symplectic geometry}. in The Gelfand Mathematical Seminars, 1990-1992, Birkhauser, Boston, 1993, 173-187.
\bibitem[K2]{ka} M.M.Kapranov, \emph{Analogies between the Langlands correspondence and topological quantum field theory}, In Functional analysis on the eve of the 21st century, Vol. 1 (New Brunswick,  NJ, 1993), volume 131 of Progr. Math., pages 119šC151. Birkhauser Boston, Boston, MA, 1995.
\bibitem[KS]{kp} M.Kashiwara, P.Schapira, \emph{Sheaves on Manifolds}. Grundlehren der Mathematischen Wissenschaften [Fundamental Principles of Mathematical Sciences], 292, Springer-Verlag, Berlin, 1990.


\bibitem[LV]{lv} A.Lazarev, A.A.Voronov, \emph{Graph homology: Koszul and Verdier duality}. Adv. Math. Volume 218, Issue 6, 2008, 1878-1894.

\bibitem[MSS]{mss} M.Markl, S.Shnider, J.D.Stasheff, \emph{Operads in Algebra, Topology and Physics}.  American Mathematical Society, July 2007.

\bibitem[N]{n} M.V.Nori, \emph{Constructible Sheaves}. Algebra, arithmetic and geometry, Part I,
II(Mumbai, 2000), 2002, pp. 471–491.

\bibitem[KW]{kw} R.M.Kaufmann and B.C.Ward, \emph{Feynman Categories}. arxiv.org/pdf/1312.1269.

\bibitem[KW2]{kw2} R.M.Kaufmann and J.Lucas, \emph{Decorated Feynman Categories}. Arxiv.org/pdf/1602.00823.

\bibitem[P]{p} R.C.Penner, \emph{The moduli space of punctured surface
  and perturbation series}. Bull. Amer. Math. Soc.(N.S) 15(1986), no.1,
  73-77.


\bibitem[SNPR] {snpr} F.G.Santos, V.Navarro, P.Pascual and A.Roig, \emph{Moduli space and formal operads}. Duke Math. J., 129(2), 2004.

\bibitem[SY]{sy} S.I.Gelfand, Y.I.Manin, \emph{Homological algebra}. Berlin: Springer, 1999.

\bibitem[V]{v} B.Vallette, \emph{A Koszul duality for props}. Trans. Amer. Math. Soc., 359.10(2007):4865-4943.

\end{thebibliography}
\end{document}